\documentclass[12pt]{amsart} 
\usepackage{amsmath,amssymb,amsfonts,amsthm}
\usepackage{verbatim}
\usepackage[dvips]{graphicx}

\usepackage{latexsym} 
\usepackage{amssymb,amsfonts,amsthm,mathtools}
\usepackage{verbatim}
 
\usepackage{textcomp}
\usepackage[all]{xy}
\usepackage{times}
\usepackage{graphicx,color}

\usepackage{latexsym} 
\usepackage{amsmath,amssymb,amsfonts,amsthm}
\usepackage{epic}
 \usepackage{tikz} 
 
\newtheorem{remark}{Remark}
\newtheorem{theorem}{Theorem}

\newtheorem{lemma}{Lemma}
\newtheorem{example}{Example}
\usepackage{pdfpages}

 \title{Weak solutions for stochastic differential equations with additive fractional noise}

\thanks{Research partially supported by FAPESP 2018/13481-0 and 2020/04426-6 and Coordenação de Aperfeiçoamento de
Pessoal de Nível Superior - Brasil (CAPES) - Finance Code 001.}

\keywords{Fractional Brownian motion. Stochastic partial differential equations.  Weak solutions. \\
\indent 2010 {\it AMS 2000 Subject classiflcation:  60H15, 60G22, 60G18 }}

\author{Pedro J. Catuogno and Diego S. Ledesma }
\address{Departamento de
 Matem\'{a}tica, Universidade Estadual de Campinas,\\ 13.081-970 -
 Campinas - SP, Brazil.}
\email{pedrojc@unicamp.br and ledesma@unicamp.br  }

\begin{document}

\begin{abstract}
We give a new approach to prove the existence of a weak solution of 
\[
dx_t = f(t,x_t)dt + g(t)dB^H_t
\]
where $B^H_t$ is a fractional Brownian motion with values in a separable Hilbert space for suitable functions $f$ and $g$. Our idea is to use the implicit function theorem and the scaling property of the fractional Brownian motion in order to obtain a weak solution for this equation.
\end{abstract}

\maketitle

\section{Introduction}

Recently there has been a great interest in the study of stochastic partial differential equations (SPDE) driven by coloured noise. The main motivation comes from finance and physics. In finance, it is well known that the market evolves according to a geometric fractional $H-$Brownian motion with (see \cite{bender} \cite{grecksch} and \cite{elliot}). In physics, fractional mechanics and its generalization to stochastic mechanics have been developed in recent years (see \cite{mandelbrot} and  \cite{Meer}).

We are interested to deal with the following kind of SPDE
\begin{equation}
dx_t=f(t,x_t)~dt+g(t)~dB_t^H,\label{EDEs}
\end{equation} 
where $B_t^H$ is a fractional Brownian motion (fBm) with Hurst parameter $H\in(0,1)$, $f:[0,T]\times \mathbb{H}\rightarrow \mathbb{H}$ and  $g:[0,T]\rightarrow L(\mathbb{H},\mathbb{H})$ are continuous maps and a Hilbert space $\mathbb{H}$ which will be defined precisely in the section section 3 below. 

From a mathematical point of view, the problem is to guarantee the existence of solutions for the equation (\ref{EDEs}). There are, in the literature, different notions of solution for SPDE of this kind (for example \cite{daprato}, \cite{gawarecki} and \cite{pardoux}). In this article we show the existence of a weak solution for the equation (\ref{EDEs}). There are several works in this direction. Here, we highlight the contributions of D. Nualart and Y. Oukine in \cite{Nualart} that was improved by B. Boufoussi and Y. Ouknine in \cite{Boufoussi},  J. Snup\'arkov\'a in \cite{Snuparkova} and, more recently by Z. Li, W. Zhan and L. Xu in \cite{LiZhanXu}. In the above works the proofs  are based on the Girsanov Theorem and the  Picard iteration argument. Here we present an alternative idea to prove the existence of a weak solution using the scaling property of the Fractional Brownian motion. We adapt a result about deterministic dynamical systems of J. Robbin \cite{Robbin} to prove the existence of a weak solution to of equation (\ref{EDEs}) under certain hypotheses.

The work is organized as follow: in section 2 we give the basic definitions about fractional Brownian motion on Hilbert spaces and its integration theory following the references \cite{oksendal}, \cite{duncan}, \cite{grecksch}, \cite{mishura} and  \cite{nourdin}. Finally, in section 3 we stablish our main result and give a simple example.

\section{Fractional Brownian motion}

We start reviewing  the basic concepts on fractional Brownian motion (fBm) and stochastic integral with respect to fBm. 

Fix a standard filtered probability space $(\Omega,\mathcal{F},~\{\mathcal{F}_t,~t\in[0,T]\}, \mathbb{P})$, see for instance  \cite{oksendal}, \cite{mishura} and \cite{nourdin}.

A fractional brownian motion with Hurst parameter $H\in (0,1)$ is a continuous and centered Gaussian process $(B_t^H)_{t\geq 0}$ with covariance function 
\[
\mathbb{E}\left[B_t^HB_s^H\right]=\frac{1}{2}(s^{2H}+t^{2H}-|t-s|^{2H}).
\]
A very important result on fBm theory is the following integral representation
\[
B^H_t=\int_0^tK_H(t,s)~dB_s,
\]
where 
\begin{itemize}
\item $B_s$ is the standard Brownian motion.
\item 
\[
K_H(t,s):=\left\{
\begin{array}{lcc}
b_H\left[\left(\frac{t(t-s)}{s}\right)^{H-1/2}-(H-1/2)s^{1/2-H}\int_s^t(u-s)^{H-1/2}u^{H-3/2}~du.
\right]~&&H<1/2\\
&&\\
c_Hs^{1/2-H}\int_s^t|u-s|^{H-3/2}u^{H-1/2}~du&&H>1/2\\
\end{array}
\right.
\]
\item  
\[
b_H=\sqrt{\frac{2H}
{(1-2H)\beta(1-2H,H+1/2)}},\] 
\item 
\[
c_H=\sqrt{\frac{H(2H-1)}{\beta(2-2H,H-1/2)}}.
\]
and
\item
\[
\beta(a,b)=\frac{\Gamma(a+b)}{\Gamma(a)\Gamma(b)} 
\]
where $\Gamma(a)= \int_0^{\infty}x^{a-1}e^{-x}dx$ is the Gamma function. (see \cite{oksendal}, \cite{mishura} and \cite{nourdin}).
\end{itemize}

To construct the  fractional Brownian motion we consider the Schwartz space $\mathcal{S}(\mathbb{R})$ of rapidly decreasing smooth functions on $\mathbb{R}$ provided with the inner product $<\cdot,\cdot>_H$  given by

\[
<f,g>_H:=\left\{
\begin{array}{lcc}
H(2H-1)\int_0^T\int_0^Tf(s)g(t)|t-s|^{2H-2}~dsdt &\textrm{for} & H>1/2\\
&&
\\
 \int_0^T K_H^*f(s)K_H^*g(s)~ds&\textrm{for}&H<1/2.
\end{array}
\right.\]
where
\[
K_H^*f(s):=K(T,s)f(s)+\int_s^T(f(t)-f(s))\partial_tK_H(t,s)~dt.
\] 
Let $ \mathcal{S}'(\mathbb{R})$ be the dual of $\mathcal{S}(\mathbb{R})$ the space of tempered distributions on $\mathbb{R}$. We consider the mapping $f\in\mathcal{S}(\mathbb{R})\rightarrow \exp(1/2||f||_H^2)\in \mathbb{R}$. This map is positive definite in $\mathbb{S}(\mathbb{R})$ thus, by the Bochner-Minlos theorem (see for instance \cite[pp. 257]{O} ), we guarantee the existence of a probability measure $\mathbb{P}^H$ on the Borel $\sigma-$algebra $\mathcal{B}(\mathcal{S}'(\mathbb{R}))$ such that
\[
\int_{ \mathcal{S}'(\mathbb{R})} e^{i<\omega,f>}~d\mathbb{P}^H(\omega)=e^{-1/2||f||_H^2}\quad \quad \textrm{for all}~f\in\mathcal{S}(\mathbb{R}).
\]  
We will denote by $I_{[0,t]}$ the usual characteristic function of the interval $[0,t]$. We define $B^H:[0,T] \times  \mathcal{S}'(\mathbb{R}) \rightarrow \mathbb{R}$ by  
\[
B^H(t, \omega)=B_t^H(\omega)=<\omega, I_{[0,t]} >
\]
as an element of $L^2(\mathbb{P}^H)$.

It is a simple consequence of the Kolmogorov continuity theorem that $B^H$ is a fractional Brownian motion with Hurst parameter $H$.

In order to construct the Wiener integral with respect to the fBm, we follow \cite{oksendal}. Consider the functions  $R_{H}:[0,T]\times[0,T]\rightarrow [0,\infty)$ given by
\[
R_H(t,s)=\frac{1}{2}(s^{2H}+t^{2H}-|t-s|^{2H}),
\]
and the closure $\mathcal{H}$ of the vector space spanned by the functions 
\[\{R_H(t,~),~t\in[0,T]\}\] with respect to the scalar product
\[
<R_H(t,\cdot),R_H(s,\cdot)>:=R_H(t,s) ~ \mbox{ for all }t,s\in[0,T].
\]

The Wiener integral with respect to the fBm is defined as the linear extension  of the isometric map 
\[
R_H(t,~)\rightarrow B^H_t\quad \textrm{for all}~t \in[0,T],
\]
from $\mathcal{H}$ in $L^2(\mathbb{P}^H)$.

To define the stochastic integral of a function with respect to the fBm we  follow the usual path \cite{oksendal}, \cite{mishura} and  \cite{nourdin}. We start defining it for a simple function $\phi=\sum_i{a_i}\chi_{[t_i,t_{i+1})}$ as 
\[
\int_0^T\phi(s)~dB_s^H=\sum_i{a_i}(B_{t_{i+1}}^H-B_{t_i}^H).
\]
Then, we extend it to $L^2(\mathbb{P}^H)$ and we have the isometry
\begin{equation}
\mathbb{E}\left[\left<\int_0^Tf(s)~dB_s^H,\int_0^Tg(s)~dB_s^H\right>\right]=<f,g>_H,\quad \label{isometry}
\end{equation}
see \cite{oksendal} and \cite{gripenberg}.

Now, we review the construction of a fractional Brownian motion in a Hilbert space $\mathbb{H}$. 

Let $\mathbb{H}$ be a separable Hilbert space. We will  denote by $L(\mathbb{H})$ the space of continuous linear maps from $\mathbb{H}$ to $\mathbb{H}$. Let $Q\in L(\mathbb{H})$ be a symmetric, non negative trace class operator. 

Let $\{\lambda_n\}_{n\in\mathbb{N}}$ be a discrete family of eigenvalues of $Q$, counted with multiplicity, and let $\{e_n\}_{n\in\mathbb{N}}$ be the normalized eigenvectors, this is 
\[
Q(e_n)=\lambda_ne_n.
\]

Let $\{B_t^n\}$ be independent 1-dimensional fBm with parameter $H$. Then the series
\[
B_t^H=\sum_{n=1}^\infty \sqrt{\lambda_n}B_t^ne_n,\quad t\geq 0
\]
converges almost surely and in $L^p$ for $p\geq 1$ (see \cite{oksendal}, \cite{grecksch}, \cite{mishura} and \cite{nourdin} ). We say that $B_t^H$ is a  trace-class fractional Brownian motion in $\mathbb{H}$ with covariance $Q$ and Hurst parameter $H$.

 Let $g:[0,T]\rightarrow L(\mathbb{H})$ be  a continuous map. We define the integral of $g$ with respect to the trace-class fBm $B^H$ as 
\[
\int_0^tg(t)~dB_t^H:=\sum_{n,m=1}^\infty\left(\int_0^t\sqrt{\lambda_m}<g(t)e_m,v_n>~dB_t^m\right)v_n,
\]
where $\{v_m\}_{m\in\mathbb{N}}$ is any orthonormal basis  of $\mathbb{H}$.

Using the isometry (\ref{isometry}) we get that 
\begin{eqnarray*}
\mathbb{E}\left|\int_0^tg(t)~dB_t^H\right|^2&=&\sum_{n,m=1}^\infty\mathbb{E}\left|\int_0^t\sqrt{\lambda_m}<g(t)e_m,v_n>~dB_t^m\right|^2\\
&=&\sum_{n,m=1}^\infty   \lambda_m  <<g(t)e_m,v_n>,<g(t)e_m,v_n>>_H.
\end{eqnarray*}
Thus 
\begin{equation}
\mathbb{E}\left|\int_0^tg(t)~dB_t^H\right|^2=\left\{\begin{array}{lcc}
\sum_{j,k=1}^\infty   \lambda_k H(2H-1)\int_0^T\int_0^T<g(t)e_k, g(s)e_k> \times&&\\
 \times|t-s|^{2H-2}~dsdt&\textrm{if}&H\geq 1/2,\\
\\~\\
~\\
~\\
\sum_{j,k=1}^\infty   \lambda_k \int_0^T<K_H^*g(s)e_k,K_H^*g(s)e_k>~ds&\textrm{if}&H< 1/2,
\end{array}\right.
\end{equation}\label{estimativas}
where
\begin{eqnarray*}
<K_H^*g(s)e_k,K_H^*g(s)e_k>&=&K(T,s)^2||g(s)e_k||^2+\\
&&+2K(T,s)\int_s^T<g(s)e_k,(g(t)-g(s))e_k>\partial_tK_H(t,s)~dt+\\
&&+\int_s^T\int_s^T<(g(r)-g(s))e_k,(g(t)-g(s))e_k>\times \\
&&\times \partial_tK_H(t,s)\partial_tK_H(r,s)~~drdt.
\end{eqnarray*}

The next lemma is a consequence of the above set up and it shows  estimates that we will use below.
\begin{lemma}\label{lemma1} Let  $g ,~h :[0,T]\rightarrow L(\mathbb{H},\mathbb{H})$ be continuous functions such that 
\begin{equation}
 \sup_{r\in[0,T]}||g(r)e_j||^2_H<\infty\quad \textrm{and}\quad  \sup_{r\in[0,T]}||rh(r)e_j||^2_H<\infty. \label{hipotesis}
\end{equation}
Then for any positive real numbers $a$ and $k$, 
\begin{eqnarray*}
\sum_{j}\lambda_j\mathbb{E}\left| a^{Hk}\int_0^Tg(a^ks)(e_j)~d B_t^H \right|^2
&<&\infty\\
\sum_{j}\lambda_j\mathbb{E}\left| a^{Hk}\int_0^Tsh(a^ks) ~d B_t^H \right|^2
&<&\infty.\\
\end{eqnarray*}
\end{lemma}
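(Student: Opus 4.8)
The plan is to reduce both bounds to the explicit variance formula \eqref{estimativas} derived just above, with the integrand $g$ there replaced by the scaled maps $r\mapsto g(a^k r)$ and $r\mapsto r\,h(a^k r)$, and then to show that the hypotheses \eqref{hipotesis} make the resulting sums finite termwise and summable in $j$. First I would treat the case $H\ge 1/2$, which is the easier one: applying the isometry \eqref{isometry} coordinatewise gives
\[
\sum_j \lambda_j\,\mathbb{E}\left| a^{Hk}\int_0^T g(a^k s)(e_j)\,dB_s^H\right|^2
= a^{2Hk}\sum_j \lambda_j\, H(2H-1)\int_0^T\!\!\int_0^T \langle g(a^k t)e_j, g(a^k s)e_j\rangle\,|t-s|^{2H-2}\,ds\,dt,
\]
and Cauchy--Schwarz bounds the inner product by $\sup_{r\in[0,T]}\|g(r)e_j\|^2$; the double integral of $|t-s|^{2H-2}$ over $[0,T]^2$ is a finite constant depending only on $H,T$, so the whole expression is $\le C(H,T)\,a^{2Hk}\sum_j\lambda_j\sup_{r}\|g(r)e_j\|^2$. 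Since $\|\cdot\|_H$ dominates (a constant times) the ambient norm on the relevant function space, the first hypothesis in \eqref{hipotesis} together with $\sum_j\lambda_j=\operatorname{tr}Q<\infty$ closes this case; the second estimate is identical with $g(a^k r)$ replaced by $r\,h(a^k r)$, using $\sup_r\|r h(r)e_j\|_H^2<\infty$ and noting that $a^k s\in[0,a^kT]$ only rescales the sup by a harmless factor.

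For $H<1/2$ I would instead start from the alternative line of \eqref{estimativas}, namely
\[
\sum_j\lambda_j\,\mathbb{E}\left| a^{Hk}\int_0^T g(a^k s)(e_j)\,dB_s^H\right|^2
= a^{2Hk}\sum_j\lambda_j\int_0^T \|K_H^*\big(g(a^k\,\cdot)e_j\big)(s)\|^2\,ds,
\]
and then use the three-term expansion of $\langle K_H^*\psi(s),K_H^*\psi(s)\rangle$ displayed right before the lemma with $\psi(r)=g(a^kr)e_j$. Each of the three terms is controlled by $\sup_{r\in[0,T]}\|g(r)e_j\|^2$ (for the differences one writes $\|(g(r)-g(s))e_j\|\le 2\sup_r\|g(r)e_j\|$) times kernel integrals such as $\int_0^T K(T,s)^2\,ds$ and $\int_0^T\big(\int_s^T|\partial_t K_H(t,s)|\,dt\big)^2 ds$. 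The point is that these kernel integrals are finite constants depending only on $H$ and $T$ — this is exactly the statement that $\|\mathbf 1_{[0,T]}\|_H<\infty$ and that $K_H^*$ maps bounded functions into $L^2[0,T]$, which is standard in fBm theory (e.g.\ \cite{mishura},\cite{nourdin}) — so again the sum is $\le C(H,T)\,a^{2Hk}\sum_j\lambda_j\sup_r\|g(r)e_j\|^2<\infty$. The second estimate follows verbatim with $g$ replaced by $r\mapsto r h(r)$, invoking the second half of \eqref{hipotesis}.

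The main obstacle, and the one place deserving care, is the $H<1/2$ bookkeeping: one must verify that the singular kernel $\partial_t K_H(t,s)$ (which blows up as $t\downarrow s$ and as $s\downarrow 0$) is still integrable enough for the quantities $\int_0^T(\int_s^T|\partial_tK_H(t,s)|dt)^2ds$ and the analogous double-kernel integral to be finite, and that the scaling $s\mapsto a^k s$ does not interact badly with these singularities. I expect this to come down to the same estimates that already guarantee $\|R_H(t,\cdot)\|_{\mathcal H}<\infty$ and the well-definedness of the Wiener integral in \eqref{isometry}, so it can be quoted rather than recomputed; apart from that, the proof is a routine application of Cauchy--Schwarz, the triangle inequality, and $\operatorname{tr}Q<\infty$.
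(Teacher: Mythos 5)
Your proposal is correct and follows essentially the same route as the paper: both reduce to the variance formula \eqref{estimativas}, pull the supremum of $\|g(a^k r)e_j\|^2$ (resp.\ $\|r h(a^k r)e_j\|^2$) out of the kernel integrals to obtain a bound of the form $C(H,T)\,a^{2Hk}\sup_r\|\cdot\|^2\sum_j\lambda_j$, and conclude from \eqref{hipotesis} and $\operatorname{tr}Q<\infty$. The only difference is that you spell out the Cauchy--Schwarz step and flag the finiteness of the singular kernel integrals for $H<1/2$, which the paper simply asserts by naming them $C(H,T)$.
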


\begin{proof}
It follows directly from the construction of the fractional Brownian motion $B^H$ and   (\ref{estimativas}) we get
\begin{eqnarray*}
\sum_{j}\lambda_ja^{2Hk}\mathbb{E}\left| \int_0^Tg(a^ks)(e_j)~d(B_t^H)^j\right|^2
&\leq &C(H,T)a^{2Hk}\sup_{r\in[0,T]}||g(a^kr)e_j||^2_H\sum_j\lambda_j\\ 
\sum_{j}\lambda_ja^{2Hk}\mathbb{E}\left| \int_0^Tsh(a^Ks)(e_j)~d(B_t^H)^j\right|^2
&\leq&C(H,T)a^{2Hk}\sup_{r\in[0,T]}||rh(a^kr)e_j||^2_H\sum_j\lambda_j \\
\end{eqnarray*} 
where for $H>1/2$,
\[
C(H,T)=H(2H-1)\int_0^T\int_0^T |t-s|^{2H-2}~dsdt
\]
 and for $H<1/2$ 
\begin{eqnarray*}
C(H,T)&=& \int_0^TK(T,s)^2~ds+2\int_0^T \int_s^T|K(T,s) \partial_tK_H(t,s)|~dt~ds\\
&&+ \int_0^T\int_s^T\int_s^T   |\partial_tK_H(t,s)\partial_tK_H(r,s)|~drdtds.
\end{eqnarray*}
Now, the hypothesis (\ref{hipotesis}) guarantee the result. 
\end{proof}

\section{Main result}

We consider stochastic diferential equations  over $\mathbb{H}$ of the following type
\begin{equation}
dx_t=f(t,x_t)~dt+g(t)~dB_t^H,\label{EDE}
\end{equation} 
where
\begin{itemize}
\item $B_t^H$ is a trace-class fractional Brownian motion in $\mathbb{H}$ with covariance $Q$ and Hurst parameter $H$.
\item $f:[0,T]\times \mathbb{H}\rightarrow \mathbb{H}$ and  $g:[0,T]\rightarrow L(\mathbb{H},\mathbb{H})$ are continuous maps.
\end{itemize}
We say that a process $x_t$ is a weak solution to equation (\ref{EDE}) starting at $x_0\in \mathbb{H}$ if
\[
x_t-x_0=\int_0^tf(s,x_s)~ds+\int_0^tg(s)~d\tilde B_s^H,
\]
for some fractional Brownian motion $\tilde B_s^H$ with Hurst parameter $H$.

\begin{theorem}\label{teorema} Let $\mathbb{H}$ and $\tilde{\mathbb{H}}$ be two Hilbert spaces such that $\mathbb{H}\subset \tilde{\mathbb{H}}$ densely an there is a family of linear operators $\{S_n\}_{n\in \mathbb{N}}$ such that 
\begin{itemize}
\item $S_n:\tilde{\mathbb{H}}\rightarrow \tilde{\mathbb{H}}$,
\item $S_n(x)\in \mathbb{H}$ for all $x\in \tilde{\mathbb{H}}$,
\item $||S_n-I_{\tilde{\mathbb{H}}}||\rightarrow 0$.
\end{itemize}
Let $f:[0,T]\times \mathbb{H}\rightarrow \tilde{\mathbb{H}}$  and $g:[0,T]\rightarrow L(\tilde{\mathbb{H}},\tilde{\mathbb{H}})$ be continuously differentiable  maps such that 
\begin{itemize}
\item $ |f(t,x)|+|\partial_tf(t,x)|\leq \phi(t)(1+|x|)$,
\item $|\partial_xf(t,x)v|^2\leq \phi(t)^2(1+|x|^2+|v|^2)$,
\item $|g(t)(v)|+|\partial_tg(t)(v)|\leq \phi(t)|v|$,
\end{itemize} for a continuous square integrable function $\phi:[0,T]\rightarrow \mathbb{R}$.

Then there exists a fractional Brownian motion  $\tilde B^H$  in $\tilde{\mathbb{H}}$ with covariance $Q$ and Hurst parameter $H$, which is a rescaling of the original $B^H$, and $X:[0,T]\times \Omega\times \mathbb{H}\rightarrow \tilde{\mathbb{H}}$ satisfying the equation
\[
X(t)-x_0=\int_0^tf(s,X(s))~ds+\int_0^tg(s)~d\tilde B^H_s,
\] for any $t\in [0,T]$.

Furthermore, the dependence of $X$ in $x_0$ is smooth.

\end{theorem}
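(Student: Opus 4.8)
The plan is to reduce the stochastic equation to a pathwise (i.e.\ $\omega$‑by‑$\omega$) deterministic integral equation and then to run, in the Hilbert‑space setting, the implicit–function–theorem argument of Robbin~\cite{Robbin}. Concretely, having fixed the driving noise one sets $W(t):=\int_0^t g(s)\,d\tilde B^H_s$ and substitutes $Y=X-W$; then $X$ solves the equation iff $Y$ solves
\[
Y(t)=x_0+\int_0^t f\bigl(s,\,Y(s)+W(s)\bigr)\,ds,
\]
which, once $\omega$ is frozen, is an ordinary (Volterra) integral equation whose vector field $(t,y)\mapsto f(t,y+W(t))$ depends on time only through the continuous, but only Hölder‑regular, path $W$. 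This is why one uses the \emph{integral} formulation and adapts Robbin's argument rather than quoting a $C^1$ ODE theorem verbatim: existence will come from the invertibility of a Volterra perturbation of the identity, and smooth dependence on $x_0$ from the inverse function theorem applied to that map.

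The first technical point is that $f$ only accepts arguments in $\mathbb H$ while $W(t)$ a priori lies only in $\tilde{\mathbb H}$; this is handled by the operators $S_n$. For each fixed $n$ (and each $\omega$) I would work in a Banach space $E$ of continuous paths $[0,T]\to\tilde{\mathbb H}$ equipped with a norm weighted by $t\mapsto\exp\!\bigl(C\int_0^t\phi\bigr)$, and define the Robbin map
\[
\Phi_n(Y)(t):=Y(t)-x_0-\int_0^t f\bigl(s,\,S_n(Y(s)+W(s))\bigr)\,ds .
\]
Using the hypotheses $|f(t,x)|\le\phi(t)(1+|x|)$ and $|\partial_xf(t,x)v|^2\le\phi(t)^2(1+|x|^2+|v|^2)$ (the latter forcing $\|\partial_xf(t,x)\|\le\phi(t)$), $\Phi_n$ is $C^1$, and its derivative at any $Y$ is $I-K$ with $K$ a Volterra operator whose kernel is dominated by $\phi$; for $C$ large enough this makes $I-K$ a contraction‑type perturbation of the identity, so $\Phi_n$ is a $C^1$ diffeomorphism of $E$ onto itself. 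Hence $Y_n:=\Phi_n^{-1}(0)$ exists on the whole interval $[0,T]$ (no explosion, by the linear growth and Gronwall), and since $x_0$ enters $\Phi_n$ only through the additive constant, $x_0\mapsto Y_n$ is as smooth as $f$ is; the bounds on $\partial_t f,\partial_t g$ are what make the $H<1/2$ stochastic integrals and the integration by parts in the drift term legitimate.

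The scaling property of $B^H$ enters in the construction and control of $W$. By self‑similarity a time rescaling of $B^H$ is again a trace‑class $Q$‑fractional Brownian motion with the same Hurst parameter — this is the $\tilde B^H$ of the statement — and Lemma~\ref{lemma1} furnishes exactly the square‑integrable estimates (the $a^{Hk}\int_0^T g(a^ks)e_j\,dB^H$ and $a^{Hk}\int_0^T s\,h(a^ks)\,dB^H$ bounds, the second coming from the integration by parts on the drift with $h=\partial_t g$) needed to guarantee that $W$ admits a continuous version with finite $\sup_{t\in[0,T]}\|W(t)\|_{\tilde{\mathbb H}}$ almost surely, so that $W\in E$ for a.e.\ $\omega$ and all the $E$‑estimates above are uniform in $n$.

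Finally I would pass to the limit $n\to\infty$: the uniform bounds on $Y_n$ in $E$ together with $\|S_n-I_{\tilde{\mathbb H}}\|\to0$ allow extraction of a limit $Y$ solving $Y(t)=x_0+\int_0^t f(s,Y(s)+W(s))\,ds$, whence $X:=Y+W$ solves the equation driven by $\tilde B^H$, and the smoothness of $x_0\mapsto X$ is inherited from that of each $x_0\mapsto Y_n$ together with the uniformity of the limit (one differentiates the fixed‑point relation and solves the resulting linearized Volterra equation). I expect the genuine obstacle to be this last step: one must choose $E$, the weight, and the scaling so that all estimates are independent of $n$ \emph{and} so that the limiting vector field $f(s,\cdot+W(s))$ still makes sense — i.e.\ that the limit $Y(s)+W(s)$ is admissible for $f$ — and it is here, in controlling the $\mathbb H$‑versus‑$\tilde{\mathbb H}$ discrepancy uniformly, that the self‑similarity of the fractional Brownian motion and Lemma~\ref{lemma1} are used in an essential way.
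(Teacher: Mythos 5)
Your route is genuinely different from the paper's, and it has a real gap at exactly the point you flag. The paper does not subtract the stochastic convolution, does not argue pathwise, and --- crucially --- never lets $n\to\infty$. It applies the implicit function theorem to
\[
F_n(X,a,x_0)(t)=S_nX(t)-\int_0^t a^kf(a^ks,x_0+S_nX(s))\,ds-\int_0^t a^{Hk}g(a^ks)\,dB^H_s
\]
in the variable $X$ at the point $(0,0,x_0)$, where $F_n(0,0,x_0)=0$ and $\partial_XF_n(0,0,x_0)W=S_nW$ is invertible for $n$ large because $\|S_n-I\|\to0$. This is where Robbin's rescaling \cite{Robbin} does the work: the smallness of the scaling parameter $a$, not a weighted norm or a Gronwall estimate, is what makes the linearization trivially invertible. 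The solution is then $\tilde X=x_0+S_nG_n(x_0,\epsilon)(\cdot/\epsilon^k)$ for one fixed large $n$, so the same operator $S_n$ appears applied to the same object both inside and outside $f$: the process at which $f$ is evaluated is literally the solution, which lies in $x_0+\mathrm{range}(S_n)\subset\mathbb{H}$, and no passage to the limit in $n$ is ever required.

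In your scheme this structural trick is absent, and the resulting gap is not cosmetic. At finite $n$ your fixed point $Y_n$ solves $Y_n(t)=x_0+\int_0^t f\bigl(s,S_n(Y_n(s)+W(s))\bigr)\,ds$, so $X_n=Y_n+W$ satisfies the equation with $f(s,S_nX_n(s))$ in place of $f(s,X_n(s))$ --- a modified equation. You therefore cannot stop at finite $n$; but in the limit $n\to\infty$ the argument $Y(s)+W(s)$ lives in $\tilde{\mathbb H}$ with no reason to lie in $\mathbb{H}$, which is where $f$ is defined, and nothing in the hypotheses closes this: $\|S_n-I\|_{\tilde{\mathbb H}\to\tilde{\mathbb H}}\to0$ gives no control in the $\mathbb{H}$-norm and does not force the limit to be $\mathbb{H}$-valued. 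Self-similarity and Lemma \ref{lemma1} will not rescue this step, since in your argument they only control $W$, whereas in the paper the scaling is the engine of existence itself. Two side remarks: your observation that the quadratic hypothesis on $\partial_xf$ forces $\|\partial_xf(t,x)\|\le\phi(t)$ (by homogeneity in $v$) is correct, and your weighted-norm global inversion would, if the admissibility issue were resolved, actually give more than the paper (solvability on all of $[0,T]$ without rescaling time); but as written the proposal does not yield the stated theorem.
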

\begin{proof} We fix $0<a<1$ and $k\in 2\mathbb{N}$  such that $kH>1.$  
For any $X\in  C([0,T],L^2(\Omega, \tilde{\mathbb{H}}))$ we define 
\[
F_n(X,a,x_0)(t)=S_nX(t)-\int_0^ta^kf(a^ks,x_0+S_nX(s))~ds-\int_0^ta^{Hk}g(a^ks)~dB_s^H.
\]
By Lemma \ref{lemma1} and Fubini's theorem
\begin{eqnarray*}
\mathbb{E}[||F_n(X,a,x_0)(t)||^2]&\leq& 8\mathbb{E}[||S_nX(t)||^2]\\
&&+8\int_0^tt(a^{2k}|\mathbb{E}[|| f(a^ks,x_0+S_nX(s))||^2]~dt\\
&&+8C(H,T)\int_0^t\sum_j\lambda_ja^{2Hk}||g(a^ks)e_j||^2_H~ds.
\end{eqnarray*}
Thus,
\[F_n:C([0,T],L^2(\Omega, \tilde{\mathbb{H}}))\times \mathbb{R}\times \mathbb{H}\rightarrow  C([0,T],L^2(\Omega, \tilde{\mathbb{H}})),\]
is well defined and $F_n(0,0,x_0)=0$.  

Now we claim that $F_n$ belong to $C^1$ and $\partial_XF_n(0,0,x_0)$ is invertible. In fact, we have that 
\begin{eqnarray*}
D(F_n)_{(X,a,x_0)}(W,b,y)(t)&=&S_nW(t)-\int_0^ta^kDf_{(a^ks,x_0+S_nX(s))}(S_nW(s),b,y)~ds\\
&&-\int_0^tka^{Hk+k-1}sb\partial_tg(a^ks)~dB_s^H,
\end{eqnarray*}
where $W\in C([0,T],L^2(\Omega, \tilde{\mathbb{H}}))$, $b\in\mathbb{R}$ and $y\in\mathbb{H}$. 

We observe that 
\[
Df_{(a^ks,x_0+S_nX)}(S_nW,b,y)=\partial_tf(a^ks,x_0+S_nX(s))ka^{k-1}b+\partial_xf(a^ks,x_0+S_nX(s))(y+S_nW),
\]
is an integrable function for each $n\in\mathbb{N}$ big enough. In fact,
\begin{eqnarray*}
\int_0^t|\partial_tf(a^ks,x_0+S_nX(s))|^2~dt&\leq&\int_0^t\phi(s)^2|S_nX(s)|^2~ds\\
&\leq&\int_0^t\phi(s)^2| X(s)|^2~ds+\int_0^t\phi(s)^2|S_nX(s)-X(s)|^2~ds,
\end{eqnarray*}
and
\begin{eqnarray*}
\int_0^t|\partial_xf(a^ks,x_0+S_nX(s))(y+S_nW)|^2~ds&\leq&\int_0^t\phi(s)^2(1+|x_0+S_nX(s)|^2)~ds+\\
&&+\int_0^t\phi(s)^2|y+S_nW|^2~ds.
\end{eqnarray*}
Thus $F_n$ belong to $C^1$.

We have that $F_n$ verifies the hypothesis of the the implicit function theorem (see \cite[pp. 121]{marsden} ) because
\[
\partial_XF_n(0,0,x_0)W(t)=S_nW(t).
\]
and  $S_n$ is invertible for $n$ big enough. This follows from 
\[
S_n=I+(S_n-I),
\]
with $||S_n-I||\rightarrow 0$.

By the implicit function theorem,  there is a neighborhood $U$ of $(0,x_0)\in [0,T]\times \mathbb{H}$ and a differentiable map $G_n:U\rightarrow  C([0,T],L^2(\Omega, \tilde{\mathbb{H}}))$ such that 
\[
F_n(G_n(a,x),a,x)=0\quad \textrm{for all} ~(a,x)\in U.
\]We set 
\[
X(t):=S_nG_n(x_0,\epsilon)(t/\epsilon^k).
\]
Then, considering the  self-similarity property $\tilde B_{a^kt}^H:=a^{kH} B_t^H$ (see \cite{mishura}, \cite[Prop. 2.2]{nourdin}) we get 
\begin{eqnarray*}
X(t)-\int_0^tf(r,x_0+X(r))~dr-\int_0^tg(r )~d\tilde B_r^H&=&S_nG_n(x_0,\epsilon)(t/\epsilon^k)\\
&&-\int_0^{t/\epsilon^k}\epsilon^k f(\epsilon^k s,x_0+S_nG_n(x_0,\epsilon)(s))~ds\\
&&-\int_0^{t/\epsilon^k}\epsilon^{kH} g(\epsilon^k s )~dB_{s}^H\\
&=&F_n(G_n(x_0,\epsilon),\epsilon,x_0)(t/\epsilon^k)=0.
\end{eqnarray*}
Thus
\[
\tilde X=x_0+X,
\]
is a solution of the equation (\ref{EDE}) that depends smoothly in $x_0$. 
 
\end{proof}

\begin{remark} To show the result above we just need the estimates 
\[
\int_0^tt(a^{2k}|\mathbb{E}[ f(a^ks,x_0+X(s))|^2]~dt<\infty,
\]
and
\[
\int_0^t[\partial_tf(a^ks,x_0+X(s))ka^{k-1}b+\partial_xf(a^ks,x_0+X(s))(y+W)]^2~ds<\infty.
\]
\end{remark}

\begin{example} 
We consider the quasi-linear equation em $L^2(\mathbb{R})$ given by \begin{eqnarray}
du(t,x)&=&[\alpha \Delta u(t,x)+\beta u(t,x)+\overrightarrow{\gamma} \cdot\nabla u(t,x)]~dt+dB_t^H   \label{eq2}\\
u(0,x)&=&v(x)\nonumber
\end{eqnarray}
for $\alpha, \beta$ non-negative constants, $\overrightarrow{\gamma}\in \mathbb{R}^n$ and $v(x)\in \mathcal{H}^2(\mathbb{R})$. 

We first observe that $\mathcal{H}^2(\mathbb{R})\subset L^2(\mathbb{R})$ is densely contained with 
\[
f(t,u)=[\alpha \Delta u +\beta u+\overrightarrow{\gamma} \cdot \nabla u]
\]
and
\[
\partial_uf(t,u)v=[\alpha \Delta v+\beta v+\overrightarrow{\gamma} \cdot \nabla v]
\]
Since the Laplace operator is linear, we get
\begin{eqnarray*}
||\Delta u||_{L^2}&\leq& C(1+||u||_{\mathcal{H}^2})\\
||u(t,x))||_{L^2}&\leq& C(1+||u||_{\mathcal{H}^2})\\
||(|\nabla u(t,x)|)||_{L^2}&\leq& C(1+||u||_{\mathcal{H}^2}),
\end{eqnarray*}for a positive constant $C$.
It follows that 
\[
|f(t,u)|+|\partial_tf(t,u)|\leq C(1+|u|)
\]
and
\[
|\partial_uf(t,u)|^2\leq C(1+|u|^2+|v|^2).
\]
By Theorem \ref{teorema} there is a weak solution for the equation (\ref{eq2}).
\end{example}

\end{document}